\documentclass[20pt, oneside]{article}   	
\usepackage{geometry}                		
\usepackage{amsmath}
\usepackage{amsthm}	
\usepackage{amssymb}
\usepackage{tikz}
\usepackage[bottom]{footmisc}
\usepackage[affil-it]{authblk}
\usepackage[toc,page]{appendix}
\newtheorem{thm}{Theorem}[section]
\newtheorem{defi}[thm]{Definition}

\newtheorem{prop}[thm]{Proposition}

\newtheorem{lem}[thm]{Lemma}
\newtheorem{cor}[thm]{Corollary}
\newtheorem*{claim*}{Claim}

\newtheorem*{thm*}{Theorem}

\newcommand{\F}{\mathbb{F}}
\newcommand{\fl}{\mathfrak{l}}

\newcommand{\Gal}{\mathrm{Gal}}

\newcommand{\GL}{\mathrm{GL}}

\newcommand{\xdownarrow}[1]{%
  {\left\downarrow\vbox to #1{}\right.\kern-\nulldelimiterspace}
}

\title{ Masser-W\"ustholz bound for reducibility of Galois representations for Drinfeld modules of arbitrary rank}
\author{Chien-Hua Chen \thanks{Electronic address: \texttt{danny30814@ncts.ntu.edu.tw}; ORCID: \texttt{0000-0003-3267-5603} ; Corresponding author}}
\affil{Mathematics Division,\\ National Center for Theoretical Sciences,\\ Taipei, Taiwan}

\begin{document}

\maketitle
\abstract
In this paper, we give an explicit bound on the irreducibility of mod-$\fl$ Galois representation for Drinfeld modules of arbitrary rank without complex multiplication. This is a function field analogue of Masser-W\"ustholz bound on irreducibility of mod-$\ell$ Galois representation for elliptic curves over number field.

\section {Introduction}
In 1993, Masser and W\"ustholz \cite{MW93-2} proved a famous result on existence of isogeny, with degree bounded by an explicit formula,  between two isogenous Elliptic curves. Building upon this achievement, they \cite{MW93} subsequently employed the isogeny estimation to establish an explicit bound on the irreducibility of mod-$\ell$ Galois representation associated to elliptic curves over number field without complex multiplication (CM).  This bound is then used to deduce a bound on the surjectivity of mod-$\ell$ Galois representation for elliptic curves over number field without CM. 

Analogous to the elliptic curve theory, David and Denis \cite{DD99} introduced an isogeny estimation applicable to Anderson $t$-modules. In particular, they deduced an isogeny estimation for Drinfeld $\F_q[T]$-modules over a global function field, see Theorem \ref{dd} for more details. This naturally prompts the query of whether the strategy employed by Masser-W\"ustholz can be adapted to deduce an irreducibility limit for mod-$\fl$ Galois representations concerning rank-$r$ Drinfeld modules without CM. However, the Masser-W\"ustholz strategy can not be applied directly to the context of Drinfeld modules. The main resean is when one computes the degree of isogeny between Drinfeld modules, the degree is always a power of $q$, which is not a prime number. Thus the computational trick in Lemma 3.1 of \cite{MW93} does not work for Drinfeld modules.

Nevertheless, the fundamental concept underlying the Masser-W\"ustholz approach has inspired us to produce a similar method. By combining this concept with the height estimation on isogenous Drinfeld modules, as established by Breuer, Pazuki, and Razafinjatovo (as detailed in Theorem \ref{bp}), we are equipped to deduce our main result: an explicit bound on the irreducibility of the mod-$\fl$ Galois representation for Drinfeld modules of any rank, without CM.

\begin{thm}\label{main}
Let $q=p^e$ be a prime power, $A:=\F_q[T]$, and $K$ be a finite extension of $F:=\F_q(T)$ of degree $d$. Let $\phi$ be a rank-$r$ Drinfeld $A$-module over $K$ of generic characteristic and assume that ${\rm End}_{\bar{K}}(\phi)=A$. Let $\fl=(\ell)$ be a prime ideal of $A$, consider the mod-$\fl$ Galois representation
$$\bar{\rho}_{\phi,\fl}:\Gal(\bar{K}/K)\rightarrow {\rm Aut}(\phi[\fl])\cong \GL_r(A/\fl).$$
If $\bar{\rho}_{\phi,\fl}$ is reducible, then either
\begin{equation}
\deg_T\ell-N_d\log \deg_T\ell   \leqslant \log C+ N_d\left\{ \log d +r+\log[h_G(\phi)+2]               \right\}
\end{equation}

or
\begin{equation}
\deg_T\ell\leqslant \log C+N_d[\log d\cdot h(\phi)]
\end{equation}

Here  $C$ is a computable constant depending on $q$ and $r$, and $N_d=10(d+1)^7$. Furthermore, $h(\phi)$ denotes the naive height of Drinfeld module, while $h_G(\phi)$ is the graded height of Drinfeld module. We refer their explicit definitions to Definition \ref{height}.
\end{thm}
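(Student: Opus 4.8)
The plan is to run a Masser--W\"ustholz-style argument, replacing the prime-degree bookkeeping of \cite[Lemma~3.1]{MW93} (which fails here, since isogeny degrees are powers of $q$) by a divisibility argument inside $A$ that uses the hypothesis $\mathrm{End}_{\bar K}(\phi)=A$. The two quantitative inputs are the isogeny estimate of David--Denis (Theorem~\ref{dd}) and the isogeny-height estimate of Breuer--Pazuki--Razafinjatovo (Theorem~\ref{bp}).

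\textbf{From reducibility to two competing isogenies.} As $A/\fl$ is a field, reducibility of $\bar\rho_{\phi,\fl}$ supplies a $\Gal(\bar K/K)$-stable $A/\fl$-subspace $W\subseteq\phi[\fl]$ with $s:=\dim_{A/\fl}W$ satisfying $1\leqslant s\leqslant r-1$. Being Galois stable, $W$ is the group of $\bar K$-points of a finite subgroup scheme of $\phi$ defined over $K$, so the quotient map $u\colon\phi\to\phi':=\phi/W$ is a $K$-isogeny between rank-$r$ Drinfeld $A$-modules of generic characteristic, with $|\ker u|=|W|=q^{\,s\deg_T\ell}$ (here $\phi[\fl]$ is \'etale since the characteristic is generic). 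On the other hand $\phi'$ is isogenous to $\phi$ over $K$, so Theorem~\ref{dd} provides a $K$-isogeny $w\colon\phi'\to\phi$ (replacing the supplied isogeny by a dual one if its direction is wrong, which raises the degree bound to at most a bounded power of itself) with $\log_q|\ker w|$ at most an explicit quantity $B$ depending on $d$, $r$ and the height of $\phi'$.

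\textbf{Controlling $h(\phi/W)$, composing, and concluding.} Since $|\ker u|$ grows with $\deg_T\ell$, the height of $\phi'$ is a priori large; the role of Theorem~\ref{bp} is to bound the relevant height of $\phi'=\phi/W$ in terms of $h_G(\phi)$ (equivalently $h(\phi)$), $r$ and $\log|\ker u|=s(\deg_T\ell)\log q$, which will enter the final estimate through a single logarithm. Now $w\circ u\in\mathrm{End}_{\bar K}(\phi)=A$, so $w\circ u=\phi_b$ for a nonzero $b\in A$; comparing orders of kernels via the exact sequence $0\to\ker u\to\ker(w\circ u)\to\ker w\to 0$ gives $|\ker w|\cdot q^{\,s\deg_T\ell}=|\phi[(b)]|=q^{\,r\deg_T b}$. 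Moreover $W=\ker u$ is a nonzero submodule of $\ker(w\circ u)=\phi[(b)]$ and of $\phi[\fl]$, hence of $\phi[(b)+\fl]$; as $\fl$ is maximal this forces $(b)\subseteq\fl$, i.e.\ $\ell\mid b$, so $\deg_T b\geqslant\deg_T\ell$ and therefore $|\ker w|\geqslant q^{\,(r-s)\deg_T\ell}\geqslant q^{\,\deg_T\ell}$. Combining with $\log_q|\ker w|\leqslant B$ yields $\deg_T\ell\leqslant B$. Inserting the height bound for $\phi/W$ from Theorem~\ref{bp} into $B$ produces a term of order $\log\deg_T\ell$ on the right; moving it to the left and reading off the constants from Theorems~\ref{dd} and~\ref{bp} gives alternative~(1). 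The estimate of Theorem~\ref{dd} has the shape of a maximum of two terms, and running the same computation with the second term (the one involving $h(\phi)$ rather than $h_G(\phi)$) produces alternative~(2).

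\textbf{Main obstacle.} The delicate point is to avoid circularity: the only isogeny $\phi\to\phi'$ available for free is $u$, whose degree is $q^{\,s\deg_T\ell}$, so a careless application of Theorem~\ref{dd} to the pair $(\phi,\phi')$ would reintroduce $\deg_T\ell$ on the right-hand side with an unusable sign. The resolution is precisely to route $h(\phi/W)$ through the isogeny-height comparison of Theorem~\ref{bp}: this costs only an additive $O(\log\deg_T\ell)$, which survives harmlessly on the left of~(1), instead of a term linear in $\deg_T\ell$. After that the work is bookkeeping: tracking the exponent $(d+1)^7$, the constant $10$, the constant $c_2$, and the correction $1+\tfrac{q}{q-1}-\tfrac{q^r}{q^r-1}$ (coming from the explicit form of the bound on $h(\phi/W)$ in Theorem~\ref{bp}) through the estimate. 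Finally, note that the hypothesis $\mathrm{End}_{\bar K}(\phi)=A$ is used exactly once and essentially — to force $w\circ u$ to be a scalar $\phi_b$ — which is what makes the divisibility $\ell\mid b$, hence the whole argument, go through.
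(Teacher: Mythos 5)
Your proposal is correct and rests on the same three pillars as the paper (the quotient isogeny $u\colon\phi\to\phi/W$, the David--Denis estimate of Theorem~\ref{dd}, the hypothesis ${\rm End}_{\bar K}(\phi)=A$ which turns $w\circ u$ into some $\phi_b$, and the height comparison of Theorem~\ref{bp}), but it is organized differently and in fact proves something slightly stronger. The paper additionally introduces the dual isogeny $\hat f$ of the quotient map and a second David--Denis isogeny $\phi\to\phi/H$, composes all four maps to get $\phi_{\ell b}=\phi_{N_1N_2}$, and then splits into the cases $\ell\mid N_1$ (yielding inequality (1) via $h(\phi/H)$ and Theorem~\ref{bp}) and $\ell\mid N_2$ (yielding inequality (2) via $h(\phi)$ alone). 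Your divisibility observation --- $0\neq W\subseteq\phi[(b)]\cap\phi[\fl]=\phi[(b)+\fl]$, hence $\ell\mid b$ --- shows directly that the analogue of the paper's first case always occurs, and your bookkeeping $\log\deg w=r\deg_Tb-s\deg_T\ell\geqslant(r-s)\deg_T\ell\geqslant\deg_T\ell$ matches the paper's computation in that case; feeding $h(\phi/W)\leqslant(q^r-1)h_G(\phi/W)$ and Theorem~\ref{bp} into the bound then gives inequality (1) unconditionally, which of course implies the stated disjunction (and shows alternative (2) is never actually needed). Two blemishes, neither fatal: the closing sentence claiming that Theorem~\ref{dd} ``has the shape of a maximum of two terms'' whose second term produces alternative (2) is simply wrong --- the David--Denis bound is a single expression $c_2(dh(\cdot))^{10(d+1)^7}$, and in the paper inequality (2) arises from the other divisibility case, where the relevant composition $\hat f\circ u$ is controlled by $h(\phi)$ so no height comparison and no $\log\deg_T\ell$ term appear; since your proof of (1) already yields the theorem, that sentence should just be deleted. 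Likewise the parenthetical about replacing the Theorem~\ref{dd} isogeny by a dual one (with a ``bounded power'' loss you do not justify) is unnecessary: the theorem applies directly with source $\phi/W$ and target $\phi$, which is exactly what your bound $B$ in terms of $h(\phi/W)$ requires.
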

As a corollary of Theorem \ref{main}, we deduce a sufficient condition for $\deg_T\ell$ to have irreducible mod-$\fl$ Galois representation $\bar{\rho}_{\phi,\fl}$. See corollary \ref{cormain} for details.

Regarding the specialized scenario of "rank-$2$ Drinfeld modules over $\F_q(T)$," a more nuanced estimation concerning the irreducibility of the mod-$\fl$ Galois representation has been advanced by Chen and Lee \cite{CL19}. However, their strategy uses the fact that a power of $1$-dimensional group representation is again a group representation, see proof of Proposition 7.1 in \cite{CL19}. In the context of rank-2 scenarios, the reducibility of mod-$\fl$ Galois representation always contributes a $1$-dimensional subrepresentation. But this is not true for higher rank Drinfeld modules.

On the other hand, Chen and Lee \cite{CL19} gave an explicit bound on surjectivity of mod-$\fl$ Galois representations for rank-$2$ Drinfeld modules over $\F_q(T)$ without CM. Such an explicit bound is still unknown for higher rank Drinfeld modules. The main difficulty is the classification of maximal subgroups (up to conjugacy classes) in $\GL_r$ over finite field is much more complicated comparing to the $\GL_2$ case, where one only need to take care of the Borel and Cartan cases.

\section{Preliminaries}

Let $A=\F_q[T]$ be the polynomial ring over finite field with $q=p^e$ an odd prime power, $F=\F_q(T)$ be the fractional field of $A$, and  $K$ be a finite extension over $F$. Throughout this paper, ``$\log$'' refers to the logarithm with base $q$.

\subsection{Drinfeld modules}

\begin{defi}

Let $K<x>:=\left\{\sum_{i=0}^{n}c_ix^{q^i}\mid c_i\in K\right\}$. Define $(K<x>,+,\circ)$ to be the ring of twisted $q$-polynomials with usual addition, and the multiplication is defined to be composition of $q$-polynomials.

\end{defi}

\begin{defi}
A Drinfeld $A$-module of rank $r$ over $K$ of generic characteristic is a ring homomorphism

$$\phi: A\rightarrow K<x>, \ \ a\mapsto \phi_a(x) $$
determined by
$$\phi_T(x)=Tx+g_1x^q+\cdots+g_rx^{q^r}.$$
\end{defi}

For an ideal $\mathfrak{a}=<a>$ of $A$, we may define the $\mathfrak{a}$-torsion of the Drinfeld module $\phi$ over $K$.

\begin{defi}
The $\mathfrak{a}$-torsion of a Drinfeld module $\phi$ over $K$ is defined to be
$$\phi[\mathfrak{a}]:=\left\{\textrm{ zeros of }\phi_a(x) \textrm{ in } \bar{K} \right\}\subset \bar{K}.$$
\end{defi}

Now we define the $A$-module structure on $\bar{K}$. For any elements $b\in A$ and $\alpha\in \bar{K}$. We define the $A$-action of $b$ on $\alpha$ via
$$b\cdot\alpha:=\phi_b(\alpha).$$
This gives $\bar{K}$ an $A$-module structure. And the $A$-module structure inherits to $\phi[\mathfrak{a}]$. As our Drinfeld module $\phi$ over $K$ has generic characteristic, we have the following proposition

\begin{prop}
$\phi[\mathfrak{a}] $ is a free $A/\mathfrak{a}$-module of rank $r$.
\end{prop}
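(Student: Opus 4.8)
The plan is to prove that $\phi[\mathfrak{a}]$ is a free $A/\mathfrak{a}$-module of rank $r$ by first establishing the claim for prime power ideals $\mathfrak{a} = \mathfrak{p}^n$ and then combining via the Chinese Remainder Theorem. The essential input is a count of the torsion points: since $\phi$ has generic characteristic (so $A$-characteristic zero and $\gamma$ is the embedding $A \hookrightarrow K$), the separable part of $\phi_a(x)$ is all of $\phi_a(x)$ — more precisely, $\phi_a(x)$ has nonzero derivative $\gamma(a) = a \neq 0$ as a polynomial in $x$, hence $\phi_a(x)$ is a separable polynomial. First I would record that $\deg_\tau \phi_a = r\deg_T a$ forces $\phi_a(x)$ to have degree $q^{r\deg_T a}$ as a polynomial in $x$, and separability then gives exactly $q^{r\deg_T a}$ distinct roots in $\bar{K}$. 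Thus $|\phi[\mathfrak{a}]| = q^{r\deg_T a} = |A/\mathfrak{a}|^r$, using that $|A/\mathfrak{a}| = q^{\deg_T a}$ for $\mathfrak{a} = (a)$.

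Next I would invoke the structure theorem for finitely generated modules over the PID $A$. The module $\phi[\mathfrak{a}]$ is killed by $\mathfrak{a}$, so as an $A/\mathfrak{a}$-module it decomposes (after reducing mod $\mathfrak{a}$) as $\bigoplus_{i=1}^{s} A/\mathfrak{d}_i$ for divisors $\mathfrak{d}_i \mid \mathfrak{a}$. It remains to show each $\mathfrak{d}_i = \mathfrak{a}$ and $s = r$. For this I would use the sub-step that for every prime $\mathfrak{p} = (\ell)$ dividing $\mathfrak{a}$, the $\ell$-torsion $\phi[\mathfrak{p}]$ is an $\F_q[T]/\mathfrak{p} = \mathbb{F}_{q^{\deg_T \ell}}$-vector space of dimension exactly $r$, which follows from the same separability-plus-degree count applied to $\phi_\ell(x)$. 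Then a standard argument — counting $\mathfrak{p}$-torsion inside $\bigoplus A/\mathfrak{d}_i$ shows $s = r$ for each prime dividing $\mathfrak{a}$, forcing the number of summands to be $r$; and then the cardinality count $|\phi[\mathfrak{a}]| = |A/\mathfrak{a}|^r$ forces $\prod_i |A/\mathfrak{d}_i| = |A/\mathfrak{a}|^r$ with $r$ factors each dividing $|A/\mathfrak{a}|$, which is only possible if every $\mathfrak{d}_i = \mathfrak{a}$. This yields $\phi[\mathfrak{a}] \cong (A/\mathfrak{a})^r$.

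Alternatively, and perhaps more cleanly, I would work one prime at a time: reduce to $\mathfrak{a} = \mathfrak{p}^n$ by CRT (writing $A/\mathfrak{a} \cong \prod A/\mathfrak{p}_i^{n_i}$ and correspondingly $\phi[\mathfrak{a}] \cong \prod \phi[\mathfrak{p}_i^{n_i}]$), then for a prime power observe that $\phi[\mathfrak{p}^n]/\phi[\mathfrak{p}^{n-1}]$ injects into $\phi[\mathfrak{p}]$ via multiplication by a generator of $\mathfrak{p}$, and the cardinality count shows each such quotient has order exactly $|A/\mathfrak{p}|^r = |\phi[\mathfrak{p}]|$, so the injection is a bijection; an induction on $n$ together with Nakayama over the local ring $A_\mathfrak{p}$ (or an explicit lifting argument) then shows $\phi[\mathfrak{p}^n]$ is free of rank $r$ over $A/\mathfrak{p}^n$.

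I expect the main obstacle to be the bookkeeping in passing from the cardinality identity $|\phi[\mathfrak{a}]| = |A/\mathfrak{a}|^r$ to the precise module structure: a priori a module of the right cardinality over $A/\mathfrak{a}$ need not be free (e.g. $A/\mathfrak{p}^2 \oplus A/\mathfrak{p}^0$ versus... no — but $(A/\mathfrak{p})^2$ has the same cardinality as $A/\mathfrak{p}^2$ when... ), so one genuinely needs the independent input that the $\mathfrak{p}$-torsion has dimension exactly $r$ over the residue field for every $\mathfrak{p} \mid \mathfrak{a}$, which is what rigidifies the decomposition. The separability of $\phi_a(x)$ — guaranteed by the generic characteristic hypothesis, which makes $\partial \phi_a = \gamma(a) \neq 0$ — is the one place where the hypothesis "generic characteristic" is essential and must be stated carefully; in the case of finite $A$-characteristic the proposition would fail at the primes dividing the characteristic.
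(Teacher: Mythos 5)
Your argument is correct in substance, but note that the paper itself does not prove this proposition at all: it simply cites Proposition 4.5.3 of Goss's book \cite{G96}. What you have written is essentially the standard self-contained proof that the cited reference gives: generic characteristic makes the derivative of $\phi_a(x)$ equal to the nonzero constant $\gamma(a)=a$, hence $\phi_a(x)$ is separable of degree $q^{r\deg_T a}$, so $\#\phi[\mathfrak{a}]=\#(A/\mathfrak{a})^r$; one then pins down the module structure over the PID $A$ using the extra input that $\dim_{A/\mathfrak{p}}\phi[\mathfrak{p}]=r$ for every prime $\mathfrak{p}\mid\mathfrak{a}$. Your second (CRT plus prime-power) route is the cleanest way to finish, and your closing remark correctly isolates where generic characteristic is essential. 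Two small points to tighten if you write this out in full: in the invariant-factor version you should fix the normalization $\mathfrak{d}_1\mid\cdots\mid\mathfrak{d}_s$, since the torsion count only tells you that exactly $r$ of the $\mathfrak{d}_i$ are divisible by each prime $\mathfrak{p}\mid\mathfrak{a}$, and you need the divisibility chain (or a prime-by-prime elementary-divisor argument) to conclude $s=r$ before the cardinality count forces $\mathfrak{d}_i=\mathfrak{a}$; and in the prime-power step the injection $\phi[\mathfrak{p}^n]/\phi[\mathfrak{p}^{n-1}]\hookrightarrow\phi[\mathfrak{p}]$ is induced by multiplication by $\ell^{n-1}$ (multiplication by $\ell$ instead gives $\phi[\mathfrak{p}^n]/\phi[\mathfrak{p}]\hookrightarrow\phi[\mathfrak{p}^{n-1}]$, which also works for the induction but is not what you stated). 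With these adjustments your proof is complete, and it has the merit of making the paper self-contained where the author chose to defer to the literature.
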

\begin{proof}
See \cite{G96}, Proposition 4.5.3.
\end{proof}

Let $\fl$ be a prime ideal of $A$, then the $\fl$-torsion $\phi[\fl]$ of the Drinfeld module $\phi$ is an $r$-dimensional $A/\fl$-vector space. Applying the action of absolute Galois group $\Gal(\bar{K}/K)$ on $\phi[\fl]$, we obtain the so-called mod-$\fl$ Galois representation
$$\bar{\rho}_{\phi,\fl}:\Gal(\bar{K}/K)\rightarrow {\rm Aut}(\phi[\fl])\cong \GL_r(A/\fl)$$
 for the Drinfeld module $\phi$ over $K$.

Now we proceed to define various heights associated with Drinfeld modules. Let $M_K$ be the set of all places of $K$ including places above $\infty$. For each place $\nu\in M_K$, we define $n_\nu:=[K_\nu:F_\nu]$ to be the degree of local field extension $K_\nu/F_\nu$. Furthermore, we set $|\cdot|_\nu$ to be a normalized valuation of $K_\nu$. 

\begin{defi}\label{height}
Let $\phi$ be a rank-$r$ Drinfeld module over $K$ characterized by
$$\phi_T(x)=Tx+g_1x^q+\cdots+g_{r-1}x^{q^{r-1}}+g_rx^{q^r}, \textrm{ where } g_i\in K \textrm{ and }g_r\in K^*.$$

\begin{enumerate}
\item The naive height of $\phi$ is defined to be 
$$h(\phi):={\rm max}\{h(g_1),\cdots,h(g_r)\},$$
where $h(g_i):=\frac{1}{[K:F]}\sum_{\nu\in M_K}n_\nu\cdot {\rm log}|g_i|_\nu$.

\item The graded height of $\phi$ is defined to be
$$h_G(\phi):=\frac{1}{[K:F]}\sum_{\nu\in M_K}n_\nu\cdot {\rm log}\ {\rm max}\{|g_i|_\nu^{1/(q^i-1)}\mid 1\leqslant i\leqslant r\}$$

\end{enumerate}
\end{defi}

\begin{cor}\label{heightineq}
One can observe from the definition of naive height and graded height that
$$h(\phi)\leqslant(q^r-1)\cdot h_G(\phi).$$

\end{cor}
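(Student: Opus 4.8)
The plan is to deduce the inequality from a purely local (place-by-place) comparison, the only arithmetic input being the elementary fact that $q^i-1\leqslant q^r-1$ for every $1\leqslant i\leqslant r$. Since $h(\phi)=\max_{1\leqslant i\leqslant r}h(g_i)$, it suffices to fix an index $i$ and show $h(g_i)\leqslant(q^r-1)\,h_G(\phi)$; taking the maximum over $i$ then finishes the proof. Recall that in the definitions of both heights one only records contributions from the places at which the relevant absolute value exceeds $1$; equivalently, writing $\log^{+}x:=\max\{0,\log x\}$, we have $h(g_i)=\frac{1}{[K:F]}\sum_{\nu\in M_K}n_\nu\log^{+}|g_i|_\nu$ and likewise the local term of $h_G(\phi)$ is $n_\nu\log^{+}\max_{1\leqslant j\leqslant r}\{|g_j|_\nu^{1/(q^j-1)}\}$.

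First I would establish, for a fixed place $\nu\in M_K$, the local estimate
\[
\log^{+}|g_i|_\nu \;=\; (q^i-1)\,\log^{+}\!\left(|g_i|_\nu^{1/(q^i-1)}\right) \;\leqslant\; (q^r-1)\,\log^{+}\!\left(|g_i|_\nu^{1/(q^i-1)}\right) \;\leqslant\; (q^r-1)\,\log^{+}\!\left(\max_{1\leqslant j\leqslant r}\{|g_j|_\nu^{1/(q^j-1)}\}\right).
\]
Here the first equality uses $\log^{+}(x^{c})=c\,\log^{+}x$ for $c>0$; the middle inequality uses $q^i-1\leqslant q^r-1$ together with $\log^{+}(\cdot)\geqslant 0$; and the last step uses monotonicity of $\log^{+}$ and the fact that $|g_i|_\nu^{1/(q^i-1)}$ is one of the terms appearing in the maximum. (Equivalently, one may split into the cases $|g_i|_\nu\leqslant 1$, where the left-hand side vanishes and the right-hand side is nonnegative, and $|g_i|_\nu>1$, where all quantities in sight exceed $1$ and $\log^{+}$ may be replaced by $\log$.)

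Finally I would multiply this local inequality by $n_\nu$, sum over all $\nu\in M_K$, and divide by $[K:F]$: the left-hand side becomes $h(g_i)$ and the right-hand side becomes exactly $(q^r-1)\,h_G(\phi)$ (the argument in fact yields the sharper $h(g_i)\leqslant(q^i-1)\,h_G(\phi)$). Taking the maximum over $1\leqslant i\leqslant r$ gives $h(\phi)\leqslant(q^r-1)\,h_G(\phi)$. There is no genuine obstacle in this argument — as the statement already suggests, it is a direct unwinding of the definitions; the only points requiring care are the bookkeeping with positive parts at the places where $|g_i|_\nu\leqslant 1$ and using the bound $q^i-1\leqslant q^r-1$ in the correct direction, so that the single factor $(q^r-1)$ attached to the top coefficient $g_r$ works uniformly in $i$.
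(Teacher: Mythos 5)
Your proof is correct and is precisely the direct unwinding of the definitions that the paper leaves as an unproved ``observation'': the place-by-place estimate via $q^i-1\leqslant q^r-1$, summed over $\nu\in M_K$, is all that is needed. Your $\log^{+}$ reading of the local terms is the right interpretation of the definitions (taken literally, the naive height would vanish by the product formula), and under that convention your sharper remark $h(g_i)\leqslant (q^i-1)\,h_G(\phi)$ is also valid.
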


\subsection{Isogenies}
\begin{defi}
Let $\phi$ and $\psi$ be two rank-$r$ Drinfeld $A$-modules over $K$. A {\bf{morphism}} $u:\phi\rightarrow \psi$ over $K$ is a twisted $q$-polynomial $u\in K<x>$ such that
$$u\phi_a=\psi_a u \text{ \rm for all } a\in A.$$ 
A non-zero morphism $u:\phi\rightarrow \psi$ is called an isogeny. A morphism $u:\phi\rightarrow \psi$ is called an {\bf{isomorphism}} if its inverse exists. 
\end{defi}

Set ${\rm Hom}_K(\phi,\psi)$ to be the group of all morphisms $u:\phi\rightarrow \psi$ over $K$. We denote ${\rm End}_K(\phi)={\rm Hom}_K(\phi,\phi)$. For any field extension $L/K$, we define
$${\rm Hom}_L(\phi,\psi)=\{u\in L<x> \mid u\phi_a=\psi_a u  \text{ \rm for all } a\in A \}.$$
For $L=\bar{K}$, we omit subscripts and write

$${\rm Hom}(\phi,\psi):={\rm Hom}_{\bar{K}}(\phi,\psi) \text{ \rm and } {\rm End}(\phi):={\rm End}_{\bar{K}}(\phi)$$

\begin{defi}
The composition of morphisms makes ${\rm End}_L(\phi)$ into a subring of $L<x>$, called the {\bf{endomorphism ring}} of $\phi$ over $L$. For any rank-$r$ Drinfeld module $\phi$ over $K$ with ${\rm End}(\phi)=A$, we say that $\phi$ does not have complex multiplication.

\end{defi}

\begin{defi}
Let $f:\phi\rightarrow \psi$ be an isogeny of Drinfeld modules over $K$ of rank $r$, we define the degree of $f$ to be
$$\deg f:=\# {\rm{ker}}(f).$$

\end{defi}

\begin{prop}\label{dual}
Let $f:\phi\rightarrow \psi$ be an isogeny of Drinfeld modules over $K$ of rank $r$. There exists a dual isogeny $\hat{f}:\psi\rightarrow \phi$ such that $$f\circ\hat{f}=\psi_a \textrm{ and } \hat{f}\circ f=\phi_a.$$
Here  $0\neq a\in A$ is an element of minimal $T$-degree such that ${\rm ker}(f)\subset \phi[a]$. 

\end{prop}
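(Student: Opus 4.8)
The plan is to construct $\hat f$ directly inside the twisted polynomial ring $K\{\tau\}$ by exploiting the fact that $K\{\tau\}$ is a (left and right) Euclidean domain with respect to $\deg_\tau$. First I would observe that, since $\ker(f)$ is a finite $A$-submodule of $\bar K$ stable under the $\phi$-action, it is contained in some $\phi[a]$; taking $a$ of minimal $T$-degree with $\ker(f)\subset\phi[a]$ is exactly the hypothesis. The element $\phi_a\in K\{\tau\}$ then satisfies $\ker(f)\subset\ker(\phi_a)$ as sets of points in $\bar K$, i.e. $f$ divides $\phi_a$ on the right in the ring of $q$-polynomials: there is a unique $\hat f\in\bar K\{\tau\}$ with $\phi_a=\hat f\circ f$. (Right-divisibility of $q$-polynomials is equivalent to containment of kernels; this is the separable-polynomial division that makes $K\{\tau\}$ right-Euclidean, and separability is automatic in generic characteristic since $\partial\phi_a=a\neq 0$.) This gives the relation $\hat f\circ f=\phi_a$ for free.

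The next step is to check that $\hat f$ is actually a morphism $\psi\to\phi$, i.e. $\hat f\,\psi_b=\phi_b\,\hat f$ for all $b\in A$. For this I would compute $\hat f\,\psi_b\,f=\hat f\,f\,\phi_b=\phi_a\,\phi_b=\phi_b\,\phi_a=\phi_b\,\hat f\,f=(\phi_b\,\hat f)\,f$, using that $f$ is a morphism $\phi\to\psi$ and that $\phi$ is commutative on the image of $A$. Since $K\{\tau\}$ has no zero divisors and $f\neq 0$, we may cancel $f$ on the right to conclude $\hat f\,\psi_b=\phi_b\,\hat f$. In particular $\hat f\neq 0$ (as $\phi_a\neq 0$), so $\hat f$ is an isogeny. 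It remains to produce the other relation $f\circ\hat f=\psi_a$: from $\hat f\circ f=\phi_a$ one gets $f\circ\hat f\circ f=f\circ\phi_a=\psi_a\circ f$, and cancelling $f$ on the right again yields $f\circ\hat f=\psi_a$.

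The only genuinely delicate point is the very first one — that $f$ right-divides $\phi_a$ in $K\{\tau\}$ rather than merely in $\bar K\{\tau\}$, and the precise comparison of kernels. I would argue this as follows: in $\bar K\{\tau\}$, right-division of $g$ by $f$ succeeds with zero remainder precisely when every root of the separable $q$-polynomial $f(x)$ is a root of $g(x)$ with at least the same multiplicity; here $f$ is separable (leading-term-in-$\tau^0$ nonzero because $f$ is an isogeny of generic-characteristic modules, so $\partial f\neq0$… more carefully, $\ker f$ has $\#\ker f$ distinct points equal to $q^{\deg_\tau f}$), so "same multiplicity" is just set containment $\ker f\subseteq\ker\phi_a$, which holds by the choice of $a$. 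Finally, descent to $K\{\tau\}$: the quotient $\hat f=\phi_a f^{-1}$ computed by the division algorithm has coefficients that are polynomial expressions in the coefficients of $\phi_a$ and $f$ and in $1/(\text{leading coeff of }f)$, all of which lie in $K$, so $\hat f\in K\{\tau\}$. I expect the write-up to be short once this divisibility lemma is cleanly stated; the main obstacle is simply being careful that "minimal $T$-degree of $a$" is exactly what guarantees the kernel containment and that no separability pathology intervenes, which it does not in generic characteristic.
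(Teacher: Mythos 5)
Your construction is correct and is essentially the standard argument that the paper simply delegates to the cited reference (Goss, Prop.\ 4.7.13 and Cor.\ 4.7.14): right-division by $f$ in the twisted polynomial ring, the kernel-containment criterion for exact divisibility, descent of the quotient to $K\{\tau\}$ since that division needs no $q$-th roots, and cancellation of $f$ to get the morphism property and $f\circ\hat f=\psi_a$. The one step you assert rather than prove, separability of every isogeny in generic characteristic, follows by comparing lowest $\tau$-degree terms in $f\phi_T=\psi_T f$: if $f$ had zero constant term $\partial f=0$ with lowest term $c_m\tau^m$, $m\geqslant 1$, this would force $T^{q^m}=T$ in $K$, contradicting injectivity of $\gamma$.
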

\begin{proof}
See \cite{G96} Proposition 4.7.13 and Corollary 4.7.14.

\end{proof}

The following corollary is immediate by counting cardinalities.  
\begin{cor}\label{countdeg}
As in the setting of Proposition \ref{dual}, we have 
$$q^{r\cdot\deg_T(a)}=(\deg f)\cdot(\deg \hat{f}).$$

\end{cor}

Now we can state the key tools to derive our main result:

\begin{thm}[\cite{DD99} Theorem 1.3]\label{dd}
 Let $K$ be a finite extension over $F$ with $[K:F]:=d$. Suppose that there are two $\bar{K}$-isogenous Drinfeld modules $\phi$ and $\psi$ defined over $K$. Then there is an isogeny $f: \phi\rightarrow \psi$ such that
 $$\deg f\leqslant c_2\cdot (dh(\phi))^{10(r+1)^7}.$$

Here $c_2=c_2(r,q)$ is a effectively computable constant depends only on $r$ and $q$.

\end{thm}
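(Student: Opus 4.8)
The plan is to follow the transcendence-theoretic strategy of Masser and W\"ustholz, transported to the Drinfeld setting through Anderson's theory of $t$-modules and Yu-type transcendence methods. The guiding principle is that an isogeny between $\phi$ and $\psi$ is, analytically, just a scalar carrying one period lattice into another, so that bounding the \emph{minimal} isogeny degree becomes the problem of bounding the degree of a minimal commensurable sublattice, which in turn is controlled by an effective period estimate.

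First I would pass to the analytic uniformization over $\mathbb{C}_\infty$, the completion of an algebraic closure of $F_\infty=\F_q((1/T))$. Each rank-$r$ Drinfeld module $\phi$ is uniformized by its exponential $e_\phi$ with kernel a free $A$-lattice $\Lambda_\phi\subset\mathbb{C}_\infty$ of rank $r$, and similarly for $\psi$. A $\bar K$-isogeny $u:\phi\to\psi$ corresponds to multiplication by a scalar $\alpha\in\mathbb{C}_\infty$ with $\alpha\Lambda_\phi\subseteq\Lambda_\psi$, and $\deg u=\#(\Lambda_\psi/\alpha\Lambda_\phi)$ as a finite $A$-module. Thus the existence of some isogeny is equivalent to commensurability of $\Lambda_\phi$ and $\Lambda_\psi$, and the whole task reduces to producing one such scalar whose associated index is at most $c_2\,(dh(\phi))^{10(r+1)^7}$.

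Next I would realize a minimal isogeny as a minimal sub-$t$-module. Form the product $t$-module $G_0=\phi\times\psi$ over $K$; an isogeny between the factors corresponds to a proper sub-$t$-module (a ``graph'') of $G_0$ defined over $\bar K$, whose period lattice sits inside $\Lambda_\phi\oplus\Lambda_\psi$, and a minimal isogeny corresponds to a sub-$t$-module of smallest degree whose Lie algebra contains a fixed nonzero period vector $\omega\in\mathrm{Lie}(G_0)(\mathbb{C}_\infty)$. The decisive input is the effective \emph{period theorem} in the $t$-module setting, the function-field analogue of the Masser--W\"ustholz period theorem: it gives an explicit upper bound, polynomial in the coefficient height and in the field degree, for the degree of the smallest sub-$t$-module of $G_0$ through $\omega$. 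Since this minimal sub-$t$-module is precisely the graph of a minimal isogeny, its degree bound translates directly into a bound on $\deg f$. The coefficient heights enter as the archimedean size of $\omega$, producing the factor $h(\phi)$; the degree $d=[K:F]$ enters through the field of definition and the number of relevant places; and the exponent $10(r+1)^7$ reflects the dimension of the ambient group variety appearing in the underlying zero estimate.

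The hard part is the effective transcendence estimate itself: constructing a suitable auxiliary function on $G_0$, and above all proving the multiplicity (zero) estimate on the $t$-module with \emph{explicit} polynomial dependence on $h(\phi)$ and $d$, which is where Anderson's machinery and a W\"ustholz-type group-variety argument are indispensable and where the exponent $10(r+1)^7$ is fixed. A secondary but necessary point is cosmetic: the estimate naturally involves both $h(\phi)$ and $h(\psi)$, so to state the bound purely in terms of $h(\phi)$ one invokes a height comparison for isogenous Drinfeld modules (the estimate of Breuer--Pazuki--Razafinjatovo, Theorem~\ref{bp}) to absorb $h(\psi)$ into the constant and the $h(\phi)$ term.
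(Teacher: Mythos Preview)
The paper does not prove this statement at all: Theorem~\ref{dd} is quoted verbatim from \cite{DD99} (their Theorem~1.3) as one of the ``key tools'' in the preliminaries, and is used as a black box in the proof of Theorem~\ref{main}. There is therefore no proof in the paper to compare your proposal against.

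Your outline is a plausible high-level sketch of the transcendence-theoretic argument in \cite{DD99} itself (analytic uniformization, sub-$t$-modules of the product, an effective period/zero estimate \`a la Masser--W\"ustholz--Yu), but none of that appears in the present paper. One small correction on your final paragraph: in this paper Theorem~\ref{bp} (the Breuer--Pazuki--Razafinjatovo height comparison) is \emph{not} used to reduce $h(\psi)$ to $h(\phi)$ inside the proof of Theorem~\ref{dd}; rather, both Theorem~\ref{dd} and Theorem~\ref{bp} are imported independently and combined only later, in the proof of the main Theorem~\ref{main}, where Theorem~\ref{dd} is applied once to $\phi$ and once to $\phi/H$, and Theorem~\ref{bp} is used to control $h(\phi/H)$ in terms of $h_G(\phi)$ and $\deg_T\ell$.
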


\begin{thm}[\cite{BPR21} Theorem 3.1]\label{bp}
Let $f:\phi\rightarrow \psi$ be an isogeny of rank-$r$ Drinfeld modules over $\bar{K}$ and suppose that ${\rm ker}(f)\subset \phi[N]$ for some $0\neq N\in A$. Then we have

$$|h_G(\psi)-h_G(\phi)|\leqslant \deg_T(N)+\left( \frac{q}{q-1}-\frac{q^r}{q^r-1}\right).$$

\end{thm}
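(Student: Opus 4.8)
The plan is to prove the inequality one place at a time, exploiting the fact that $h_G$ is genuinely a \emph{weighted projective height}. Writing $\phi_T = T + g_1\tau + \cdots + g_r\tau^r$, replacing $\phi$ by an isomorphic module $c\phi c^{-1}$ ($c\in\bar K^*$) multiplies each $g_i$ by $c^{1-q^i}$, so the tuple $(g_1,\dots,g_r)$ is a point of the weighted projective space with weights $q^i-1$, and
$$h_G(\phi) = \frac{1}{[K:F]}\sum_{\nu\in M_K} n_\nu\,\mu_\nu(\phi),\qquad \mu_\nu(\phi) := \max_{1\le i\le r}\frac{\log|g_i|_\nu}{q^i-1}$$
is the associated Weil height; the product formula makes this well defined on isomorphism classes. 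It is essential to keep the maximum over $i$ \emph{inside} the local term: if one illegitimately fixed a single index, the resulting global sum would vanish by the product formula, so the entire content of the theorem lives in \emph{which} coefficient realizes $\mu_\nu$ at each place. I would therefore prove the theorem by bounding each local difference $\mu_\nu(\psi)-\mu_\nu(\phi)$ and summing.

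First I would reduce to a one-sided estimate. By Proposition \ref{dual} there is a dual isogeny $\hat f:\psi\to\phi$ with $\hat f\circ f=\phi_a$ and $f\circ\hat f=\psi_a$, where $a\mid N$; hence for $x\in\ker\hat f$ one has $\psi_a(x)=f(\hat f(x))=0$, so $\ker\hat f\subset\psi[a]\subset\psi[N]$. Since the right-hand side of the claimed inequality is symmetric in $\phi$ and $\psi$ and depends only on $\deg_T N$, it suffices to prove $h_G(\psi)-h_G(\phi)\le \deg_T N+(\tfrac{q}{q-1}-\tfrac{q^r}{q^r-1})$ for an arbitrary isogeny whose kernel lies in the $N$-torsion; applying this to $\hat f$ supplies the reverse inequality, hence the absolute value.

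For the local bound I would read $\mu_\nu(\phi)$ off a Newton polygon: with $v_\nu=-\log|\cdot|_\nu$, one has $-\mu_\nu(\phi)=\min_i v_\nu(g_i)/(q^i-1)$, the smallest weighted slope attached to $\phi_T$, which measures the largest successive minimum of the local lattice (analytically at places above $\infty$, and through the Newton polygon of the torsion polynomials $\phi_M(x)$ at finite places). After a local isomorphism normalizing $\phi$, the isogeny relation $f\phi_T=\psi_T f$ expresses the coefficients of $\psi_T$ through the $g_i$ and the coefficients of $f$, and the change $\mu_\nu(\psi)-\mu_\nu(\phi)$ is governed by the slopes of the roots of $f$, i.e.\ by the valuations of the points of $\ker f$. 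Because $\ker f\subset\phi[N]$, these valuations are controlled by the Newton polygon of $\phi_N(x)$, whose total slope is controlled by $\deg_T N$; summing the local contributions and invoking the product formula, which cancels the contributions of the coefficients of $f$ (these are field elements of height zero), reassembles the global bound $\deg_T N$. The residual constant $\tfrac{q}{q-1}-\tfrac{q^r}{q^r-1}=\tfrac1{q-1}-\tfrac1{q^r-1}$ is the discrepancy created when a unit of slope is weighted by the smallest weight $q-1$ rather than the largest weight $q^r-1$, i.e.\ when the index realizing the maximum in $\mu_\nu$ migrates under the isogeny.

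The main obstacle will be the place-by-place analysis at primes of bad reduction, where the Newton polygon of $\phi_T$ is genuinely multi-sloped and the index achieving $\mu_\nu$ can jump, together with the bookkeeping needed to show that the local slope contributions sum to \emph{exactly} $\deg_T N$ (and no more), and that the total weight-migration loss is bounded by the single clean constant $\tfrac1{q-1}-\tfrac1{q^r-1}$ rather than accumulating across places. Handling inseparable isogenies in generic characteristic and confirming that the analytic successive-minima picture at $\infty$ matches the finite-place Newton-polygon picture are the remaining technical points.
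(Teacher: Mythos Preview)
The paper does not contain a proof of this statement: Theorem~\ref{bp} is quoted verbatim from \cite{BPR21}, Theorem~3.1, and used as a black box in the proof of Theorem~\ref{main}. There is therefore no proof in this paper to compare your proposal against.

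For what it is worth, your outline is broadly in the spirit of how such height-difference bounds are proved in \cite{BPR21}: the reduction to a one-sided inequality via the dual isogeny is correct and clean, and interpreting $h_G$ as a weighted projective height so that local contributions can be analyzed via Newton polygons is the right framework. But what you have written is a strategy, not a proof: the entire substance lies in the ``main obstacle'' paragraph you flag at the end, namely the place-by-place slope bookkeeping showing that the local discrepancies sum to at most $\deg_T N + \tfrac{1}{q-1}-\tfrac{1}{q^r-1}$, and you have not carried any of that out. If you want to actually prove the theorem rather than cite it, you would need to consult \cite{BPR21} directly and fill in that local analysis.
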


\section{Proof of Theorem \ref{main}}
We are given a rank-$r$ Drinfeld module $\phi$ defined over $K$ with ${\rm End}(\phi)=A$. Suppose the image of  mod-$\fl$ Galois representation ${\rm{Im}}\bar{\rho}_{\phi,\fl}$ acting on $\phi[\fl]$ has an invariant $A/\fl$-subspace of dimension $1\leqslant k\leqslant r-1$. Denote such an invariant subspace by $H$.

From Proposition 4.7.11 and Remark 4.7.12 of \cite{G96},  there is an isogeny $$f:\phi\rightarrow \phi/H$$ with ${\rm ker}(f)=H$. Since $\phi$ and $f$ both are defined over $K$, one can see that the Drinfeld module $\phi/H$ is a rank-$r$ Drinfeld module defined over $K$ as well. In addition, we have $$\deg f=\# H=q^{k\cdot \deg_T\fl}.$$

Take a dual isogeny $\hat{f}:\phi/H\rightarrow \phi$ of $f$. The degree of $\hat{f}$ can be computed using Corollary \ref{countdeg}. We get
$$\deg \hat{f}=q^{(r-k)\cdot\deg_T\fl}.$$

Besides, we can find two isogenies between $\phi$ and $\phi/H$ with bounded degree from Theorem \ref{dd}:
\begin{enumerate}
\item[$\bullet$]  $u: \phi\rightarrow \phi/H$ is such an isogeny defined over $\bar{K}$ with $\deg u\leqslant c_2\cdot(dh(\phi))^{10(d+1)^7}$

\item[$\bullet$]$u': \phi/H\rightarrow \phi$ is such an isogeny defined over $\bar{K}$ with $\deg u'\leqslant c_2\cdot(dh(\phi/H))^{10(d+1)^7}$\end{enumerate}

Since ${\rm End}(\phi)=A$, we have $u'\circ u=\phi_b$ for some $b\in A$.

Now we consider the composition of isogenies
$$u'\circ f\circ \hat{f}\circ u:\phi\rightarrow \phi/H\rightarrow \phi\rightarrow \phi/H\rightarrow \phi.$$

Since ${\rm End}(\phi)=A$, we can find  $N_1$ and $N_2$ in $A$ such that 
$$u'\circ f=\phi_{N_1}, \textrm{ and }\hat{f}\circ u=\phi_{N_2}.$$
Thus we have 
$$u'\circ f\circ \hat{f}\circ u=(u'\circ f)\circ (\hat{f}\circ u)=\phi_{N_1N_2}.$$
On the other hand, we compute in different order and get
$$u'\circ f\circ \hat{f}\circ u=u'\circ (f\circ \hat{f})\circ u=u'\circ(\phi/H)_\ell\circ u=\phi_\ell\circ(u'\circ u)=\phi_{\ell b}.$$
Thus we get the equality $\ell b=N_1N_2$.  As $\ell$ is prime, we have either case (1): $\ell | N_1$ or case (2): $\ell | N_2$.

\begin{enumerate}
\item[case (1):] $\ell | N_1$.

Then we may write $N_1=\ell\cdot \beta$ for some $0\neq\beta\in A$. From the equality $u'\circ f=\phi_{N_1}$, we have

$$\log \deg u'+k\cdot \deg_T\ell=r(\deg_T\ell+\deg_T\beta).$$

Hence we get $$\log \deg u'= (r-k)\deg_T\ell+r\deg_T\beta.$$ Combining with the bound $\deg u'\leqslant c_2\cdot(dh(\phi/H))^{10(d+1)^7}$, we obtain the inequality
$$(r-k)\deg_T\ell\leqslant \log c_2 +10(d+1)^7\log[dh(\phi/H)]-r\deg_T\beta\leqslant \log c_2 +10(d+1)^7\log[dh(\phi/H)].$$

Thus we have
\begin{equation}
\deg_T\ell\leqslant\frac{1}{r-k}\cdot \left(  \log c_2 +10(d+1)^7\log[dh(\phi/H)]  \right)\leqslant \log c_2 +10(d+1)^7\log[dh(\phi/H)].\tag{$\star$}
\end{equation}

Now from Corollary \ref{heightineq} and Theorem \ref{bp}, we have
$$h(\phi/H)\leqslant (q^r-1)h_G(\phi/H)\leqslant (q^r-1)\cdot\left[h_G(\phi)+\deg_T\ell+(\frac{q}{q-1}-\frac{q^r}{q^r-1})       \right].$$
Deduce from the above inequality, we get
$$\begin{array}{ccc}
\log h(\phi/H)&\leqslant& \log (q^r-1)+\log \left( h_G(\phi)+\deg_T\ell+(\frac{q}{q-1}-\frac{q^r}{q^r-1})                    \right)\\
\ \\
&\leqslant&r+\log \deg_T\ell+\log\left(      h_G(\phi)+1+(\frac{q}{q-1}-\frac{q^r}{q^r-1})               \right)
\end{array}$$

Combining with the inequality ($\star$) and the fact that $$\frac{q}{q-1}-\frac{q^r}{q^r-1}<1 ,$$ we have the inequality
$$
\deg_T\ell-10(d+1)^7\log \deg_T\ell   \leqslant \log c_2+ 10(d+1)^7\left\{ \log d +r+\log[h_G(\phi)+2]               \right\}.
$$
After renaming $C:=c_2$ and $N_d:= 10(d+1)^7$, we get the desired inequality (1).

\item[case (2):] $\ell | N_2$.

Then we may write $N_2=\ell\cdot \beta$ for some $0\neq\beta\in A$. From the equality $\hat{f}\circ u=\phi_{N_2}$, we have
$$(r-k)\deg_T\ell+\log \deg u=r(\deg_T\ell+\deg_T\beta).$$

Thus we get $\log \deg u=k\deg_T\ell+r\deg_T \beta$. Together with the bound $\deg u\leqslant c_2\cdot(dh(\phi))^{10(d+1)^7}$, we achieve that
$$k\deg_T\ell\leqslant \log c_2+10(d+1)^7\log [dh(\phi)]-r\deg_T\beta\leqslant c_2\cdot(dh(\phi))^{10(d+1)^7}.$$
Hence we have the inequality
$$
\deg_T\ell\leqslant \frac{1}{k}\cdot \left(\log c_2+10(d+1)^7[\log d\cdot h(\phi)]\right)\leqslant \log c_2+10(d+1)^7[\log d\cdot h(\phi)].
$$
Again, relabelling $C:=c_2$ and $N_d:= 10(d+1)^7$ gives us the inequality (2).

\end{enumerate}
This complete the proof of Theorem \ref{main}. 
\section{Lower bound on irreducibility of  $\bar{\rho}_{\phi,\fl}$}

Under the setting of Theorem \ref{main}, one may further solve the inequality (1) for $\deg_T\ell$. By setting
$$\Omega_\phi:={\rm max}\left\{\log C+ N_d\left( \log d +r+\log[h_G(\phi)+2]\right), \log C+N_d[\log d\cdot h(\phi)] \right\},$$
Theorem \ref{main} implies that  the mod-$\fl$ Galois representation is irreducible when $$\textrm{(1'): } \frac{q^{\deg_T\ell}}{\deg_T\ell^{N_d}}>q^{\Omega_\phi}  \textrm{ and (2): } \deg_T\ell>\Omega_\phi$$

Since when we fix a finite extension $K/F$ and a Drinfeld module $\phi$, the numbers $N_d$ and $\Omega_\phi$ are fixed. Elementary Calculus can tell us that the fraction $\frac{q^{\deg_T\ell}}{\deg_T\ell^{N_d}}$ tends to infinity as $\deg_T\ell$ goes to infinity. Thus we can always find a real number $C_{\phi,d}$ such that $\deg_T\ell>C_{\phi.d}$ implies $\frac{q^{\deg_T\ell}}{\deg_T\ell^{N_d}}>q^{\Omega_\phi}$. Now we try to compute $C_{\phi,d}$ explicitly:

\begin{lem}\label{num}

Let $a, b,$ and $c$ be positive real numbers such that $c^{1/b}\cdot\frac{b}{{\rm ln}a}\geqslant e$, where $e$ is the Euler's number and ${\ln(\cdot)}:={\rm log}_e(\cdot)$. Then 
$$x>\frac{-b\cdot W_{-1}(\frac{-{\rm ln}a}{c^{1/b}\cdot b})}{{\rm ln}\ a}$$
is a solution to the inequality $$\frac{a^x}{x^b}>c.$$

Here $W_{-1}$ is the negative brach of the real-valued Lambert $W$-function, i.e. the inverse function of the complex valued function $f(y)=ye^y$.

\end{lem}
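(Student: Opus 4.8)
The plan is to reduce the inequality $a^x/x^b > c$ to an equation that the Lambert $W$-function solves. First I would take natural logarithms on both sides: the inequality $a^x/x^b > c$ is equivalent (for $x>0$ and $a>1$, which is the relevant regime since $a=q>1$) to $x\ln a - b\ln x > \ln c$, i.e. to
\[
\frac{x\ln a}{b} - \ln x > \frac{\ln c}{b}.
\]
Writing $u := x\ln a / b$ this becomes $u - \ln(bu/\ln a) > (\ln c)/b$, and after exponentiating and rearranging one gets an inequality of the shape $u e^{-u} < (\text{constant})$, which is exactly the domain where $W_{-1}$ lives. Concretely I expect to arrive at
\[
\frac{-\ln a}{b}\, x \; e^{\,-\frac{\ln a}{b} x} \;<\; \frac{-\ln a}{c^{1/b} b},
\]
and then invert using the monotonicity properties of $W_{-1}$ on the interval $(-1/e,0)$.

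The key steps, in order, are: (i) pass to logarithms and isolate the term $x\ln a - b\ln x$; (ii) substitute to put the expression in the canonical form $ye^{y}$, tracking carefully the sign (the substitution will introduce a negative argument, which is why the $W_{-1}$ branch rather than $W_0$ is needed); (iii) verify that the hypothesis $c^{1/b}\cdot b/\ln a \geqslant e$ is precisely the condition guaranteeing that the argument $-\ln a/(c^{1/b} b)$ of $W_{-1}$ lies in its domain $[-1/e,0)$, so that $W_{-1}$ is defined and real; (iv) use the fact that $ye^{y}$ is decreasing on $(-\infty,-1]$ — equivalently $W_{-1}$ is decreasing on $[-1/e,0)$ — to convert the inequality on $ye^y$ into the stated lower bound $x > -b\,W_{-1}\!\bigl(-\ln a/(c^{1/b} b)\bigr)/\ln a$; and (v) check that this bound indeed forces $x$ into the region where all the manipulations are reversible, so that the displayed $x$-threshold is genuinely sufficient for $a^x/x^b>c$.

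The main obstacle I anticipate is the bookkeeping around branches and monotonicity in step (iv): the function $x\mapsto x\ln a - b\ln x$ is not monotone — it decreases then increases, with a single minimum at $x = b/\ln a$ — so the inequality $a^x/x^b>c$ has in general two solution rays, a small-$x$ one governed by $W_0$ and a large-$x$ one governed by $W_{-1}$. One must argue that we want the large-$x$ solution (which is the useful one for the application to $\deg_T\ell$), identify that it corresponds to the branch $W_{-1}$, and confirm that the boundary case $c^{1/b}b/\ln a = e$ is exactly when the two rays meet at the minimum. A secondary, purely technical point is justifying that $a>1$ (so $\ln a>0$) may be assumed; in the intended application $a=q\geqslant 2$, so this is harmless, but it should be stated. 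Once the branch analysis is pinned down, the remaining computation is a routine substitution chase with no hidden difficulty.
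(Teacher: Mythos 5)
Your plan is essentially the paper's own argument in different packaging: the same logarithm-and-substitution chase with $u=x\ln a/b$, the same observation that the hypothesis $c^{1/b}\cdot b/\ln a\geqslant e$ is exactly what puts $-\ln a/(c^{1/b}b)$ into the domain $[-1/e,0)$ of $W_{-1}$, and the same monotonicity input (the paper first solves the equality $a^x/x^b=c$, notes $-W_{-1}\geqslant 1$ so the root lies beyond $b/\ln a$, and then uses that $g(x)=a^x/x^b$ is increasing on $(b/\ln a,\infty)$; you work with the inequality directly and invert on the $W_{-1}$ branch). Note also that the hypothesis already forces $\ln a>0$, so the assumption $a>1$ you worry about is automatic, and since the lemma only claims sufficiency of the stated threshold, only one direction of your chain is actually needed.

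One concrete correction before this becomes a proof: your anticipated canonical form has the inequality reversed. Taking logarithms, $a^x/x^b>c$ is equivalent to $ue^{-u}<\ln a/(c^{1/b}b)$, and multiplying by $-1$ flips the sign, so the correct form is
\[
\left(-\tfrac{\ln a}{b}\,x\right)e^{-\frac{\ln a}{b}x}\;>\;\frac{-\ln a}{c^{1/b}\,b},
\]
not ``$<$'' as displayed. This matters for the branch step: with $y=-x\ln a/b$ and $v=-\ln a/(c^{1/b}b)$, the assumption $x>-b\,W_{-1}(v)/\ln a$ means $y<W_{-1}(v)\leqslant -1$, and since $ye^{y}$ is strictly decreasing on $(-\infty,-1]$ this yields $ye^{y}>W_{-1}(v)e^{W_{-1}(v)}=v$, which is precisely the (corrected) inequality and hence $a^x/x^b>c$. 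If you instead invert the ``$<$'' version mechanically, you land on the opposite side and obtain an upper bound on $x$ (the $W_0$/small-$x$ ray), contradicting the claimed conclusion. Your surrounding discussion of the two solution rays shows you have the right picture, so this is a sign slip to repair rather than a flaw in the method, but as written that displayed step would derail the computation.
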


\begin{proof}
The proof is done by direct computation, hence we leave it to the reader as an exercise. 
\end{proof}

 Now we take $x=\deg_T\ell$, $a=q$, $b=N_d$, and $c=q^{\Omega_\phi}$. One can check that 
 $$c^{1/b}\cdot\frac{b}{{\rm ln}(a)}\geqslant e.$$
 Therefore, Lemma \ref{num} shows that
 $$C_{\phi,d}=\frac{-b\cdot W_{-1}(\frac{-{\rm ln}(a)}{c^{1/b}\cdot b})}{{\rm ln} (a)}.$$
And we can conclude the following corollary:

\begin{cor} \label{cormain}
Let $q=p^e$ be a prime power, $A:=\F_q[T]$, and $K$ be a finite extension of $F:=\F_q(T)$ of degree $d$. Let $\phi$ be a rank-$r$ Drinfeld $A$-module over $K$ of generic characteristic and assume that ${\rm End}_{\bar{K}}(\phi)=A$. Let $\fl=(\ell)$ be a prime ideal of $A$, consider the mod-$\fl$ Galois representation
$$\bar{\rho}_{\phi,\fl}:\Gal(\bar{K}/K)\rightarrow {\rm Aut}(\phi[\fl])\cong \GL_r(A/\fl).$$

If $\deg_T\ell>{\rm max}\{C_{\phi,d}, \Omega_\phi\}$, then $\bar{\rho}_{\phi,\fl}$ is irreducible. Here

$$\Omega_\phi:={\rm max}\left\{\log C+ N_d\left( \log d +r+\log[h_G(\phi)+2]\right), \log C+N_d[\log d\cdot h(\phi)] \right\},$$

$$C_{\phi,d}=\frac{-N_d\cdot W_{-1}(\frac{-{\rm ln}(q)}{q^{\Omega_\phi/N_d}\cdot N_d})}{{\rm ln}(q)},$$ 

and $$ C=c_2(r,q) \textrm{ is an effectively computable constant in Theorem \ref{dd}}.$$
\end{cor}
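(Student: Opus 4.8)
The plan is to prove Lemma \ref{num} by first locating the exact solution of the associated equality $a^x/x^b = c$, and then invoking a monotonicity argument to pass from the equality to the inequality. I would begin by taking natural logarithms of both sides of $a^x/x^b = c$, which turns the transcendental equation into $x\ln a = \ln c + b\ln x$. The key is to massage this into the canonical form $ye^y = z$ whose inverse is the Lambert $W$-function. To that end I would divide through by $b$, introduce the substitution $\tilde x = x\tfrac{\ln a}{b}$, and rewrite the right-hand constant as $\ln\!\bigl(c^{1/b}\tfrac{b}{\ln a}\bigr)$; after a second substitution $\tilde c = -\ln\!\bigl(c^{1/b}\tfrac{b}{\ln a}\bigr)$ the equation becomes $-\tilde x + \ln \tilde x = \tilde c$, equivalently $(-\tilde x)e^{-\tilde x} = -e^{\tilde c}$.

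At this point the hypothesis $c^{1/b}\tfrac{b}{\ln a} \geq e$ enters: it guarantees that $\tilde c \leq -1$, so that $-e^{\tilde c} \in [-e^{-1}, 0)$, which is precisely the range on which the negative branch $W_{-1}$ of the Lambert function is defined. Applying $W_{-1}$ gives $-\tilde x = W_{-1}(-e^{\tilde c})$, and unwinding the two substitutions recovers the claimed root $x = \dfrac{-b\,W_{-1}\!\bigl(\tfrac{-\ln a}{c^{1/b}b}\bigr)}{\ln a}$. I would present this chain exactly as a sequence of implications, being careful that each step is reversible (so that the displayed value is genuinely a root, not merely a candidate).

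The second half is the monotonicity step. By elementary calculus, $g(x) = a^x/x^b$ has derivative of sign determined by $x\ln a - b$, so $g$ is strictly increasing on $\bigl(\tfrac{b}{\ln a}, \infty\bigr)$. It remains to check that the root we found lies in this region, i.e. that $-W_{-1}\!\bigl(\tfrac{-\ln a}{c^{1/b}b}\bigr) \geq 1$; this follows from the standard fact that $W_{-1}(z) \leq -1$ for all $z$ in its domain $[-e^{-1},0)$. Since $g$ equals $c$ at the root and is increasing beyond it, every $x$ exceeding the root satisfies $g(x) > c$, which is the assertion of the lemma. Finally, for the application leading to Corollary \ref{cormain}, I would substitute $x = \deg_T\ell$, $a = q$, $b = N_d = 10(d+1)^7$, $c = q^{\Omega_\phi}$, verify the hypothesis $c^{1/b}\tfrac{b}{\ln a}\geq e$ (which holds because $\Omega_\phi \geq 0$ and $N_d$ is large), conclude that $\deg_T\ell > C_{\phi,d}$ forces inequality (1') to fail, and combine this with the condition $\deg_T\ell > \Omega_\phi$ forcing inequality (2) of Theorem \ref{main} to fail; hence neither alternative in Theorem \ref{main} can hold, so $\bar\rho_{\phi,\fl}$ must be irreducible.

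The main obstacle I anticipate is not any deep step but rather the bookkeeping around the domain and branch of the Lambert $W$-function: one must verify that the argument $-e^{\tilde c} = \tfrac{-\ln a}{c^{1/b}b}$ actually falls in $[-e^{-1},0)$ and that we pick the branch $W_{-1}$ (not $W_0$) consistently, since that is the branch whose values are $\leq -1$ and which produces the larger root — the one relevant for "$x$ sufficiently large." The hypothesis $c^{1/b}\tfrac{b}{\ln a}\geq e$ is exactly what makes all of this coherent, and tracing how it is used at each juncture is the one point requiring genuine care.
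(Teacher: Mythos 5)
Your proposal is correct and follows essentially the same route as the paper: the same logarithmic reduction and substitutions leading to the negative branch $W_{-1}$ of the Lambert $W$-function, the same monotonicity argument for $g(x)=a^x/x^b$ on $\bigl(\tfrac{b}{\ln a},\infty\bigr)$, and the same specialization $x=\deg_T\ell$, $a=q$, $b=10(d+1)^7$, $c=q^{\Omega_\phi}$ to negate both alternatives of Theorem \ref{main}. The only blemish is a labeling slip near the end: $\deg_T\ell>C_{\phi,d}$ forces condition (1') to \emph{hold} (equivalently, inequality (1) of Theorem \ref{main} to fail), which is clearly what you intend, since your conclusion that neither alternative of the theorem can hold is exactly the paper's argument.
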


\section*{Acknowledgement}

The author would like to thank Sophie Marcques for inspiring discussions, Wei-Hung Su for showing him the computations for Lemma \ref{num}, and the referee for helpful suggestions.

\bibliographystyle{alpha}
\bibliography{DMirredbd.bib}

\end{document}